\newcommand{\de}{\delta}
\newcommand{\e}{\varepsilon}
\newcommand\tc[2]{\theta\left[\begin{matrix}#1\\ #2\end{matrix}\right]}
\newcommand{\CC}{{\mathbb{C}}}
\newcommand{\HH}{{\mathbb{H}}}
\newcommand{\PP}{{\mathbb{P}}}
\newcommand{\ZZ}{{\mathbb{Z}}}
\newcommand{\calH}{{\mathcal H}}
\newcommand{\calI}{{\mathcal I}}
\newcommand{\calA}{{\mathcal A}}
\newcommand{\calM}{{\mathcal M}}
\newcommand{\calU}{{\mathcal U}}
\newcommand{\calS}{{\mathcal S}}
\newcommand{\op}{\operatorname}
\newcommand{\Sp}{\op{Sp}}
\newcommand{\Sym}{\op{Sym}}
\theoremstyle{plain}
\newtheorem{thm}{Theorem}
\newtheorem{lm}[thm]{Lemma}
\newtheorem{prop}[thm]{Proposition}
\theoremstyle{definition}
\newtheorem{df}[thm]{Definition}
\newtheorem{rem}[thm]{Remark}
\begin{document}
\title{The Scorza correspondence in genus 3}
\author{Samuel Grushevsky}
\address{Mathematics Department, Stony Brook University,
Stony Brook, NY 11790-3651, USA. }
\email{sam@math.sunysb.edu}
\thanks{Research of the first author is supported in part by the National Science Foundation under the grant DMS-09-01086.}
\author{Riccardo Salvati Manni}
\address{Dipartimento di Matematica, Universit\`a ``La Sapienza'',
Piazzale A. Moro 2, Roma, I 00185, Italy}
\email{salvati@mat.uniroma1.it}

\begin{abstract}
In this note we prove the genus 3 case of a conjecture of G.~Farkas and A.~Verra on the limit of the Scorza correspondence for curves with a theta-null. Specifically, we show
that the limit of the Scorza correspondence for a hyperelliptic genus 3 curve
$C$ is the union of the curve $\{x,\sigma(x)\mid x\in C\}$ (where $\sigma$ is the
hyperelliptic involution), and twice the diagonal. Our proof uses the
geometry of the subsystem $\Gamma_{00}$ of the linear system $|2\Theta|$,
and Riemann identities for theta constants.

\end{abstract}
\maketitle

\section{Introduction}
Let $\calM_g$ be the moduli space of smooth complex curves of genus $g$,
and let $\calS_g^{\pm}$ be the moduli spaces of smooth spin curves, i.e.~the
moduli of pairs consisting of $C\in\calM_g$ and a line bundle $\eta$ on $C$
such that $\eta^{\otimes 2}=K_C$, and such that $h^0(C,\eta)$ is even
(resp.~odd) on $\calS_g^+$ (resp.~$\calS_g^-$). Given a pair $(C,\eta)\in
\calS_g^+$ such that $h^0(C,\eta)=0$, the {\it Scorza correspondence} is
the curve in $C\times C$ defined by
$$
 S(C,\eta):=\lbrace (x,y)\in C\times C\mid h^0(C,\eta+x-y)> 0\rbrace.
$$
The Scorza correspondence is a classical construction in algebraic geometry, a beautiful modern exposition, many further details, and references for which are given in \cite[section 5.5]{dolgachevbook}.

Since $\eta^{\otimes 2}=K_C$, it follows from Riemann-Roch theorem
that the Scorza correspondence is symmetric, and can be considered in $\Sym^2C$. Moreover, as $\eta$ was assumed
to be non-effective, the correspondence $S$ cannot intersect the diagonal in
$C\times C$.

Considered globally over
$\calS_g^+$, the Scorza correspondences form a codimension one family in $\calS_{g,2}^+$
(the moduli of spin curves with two distinct marked points),
defined away from the locus where $\eta$ is effective. This locus is
called the theta-null  divisor
$$
 \theta_{\rm null}:=\lbrace (C,\eta)\in \calS_g^+\mid h^0(C,\eta)>0\rbrace,
$$
and plays an important role in the study of the birational geometry of the
moduli of spin curves by Farkas and Verra \cite{farkasevenspin,farkasverrainterm}. It is non-empty
for $g\ge 3$.

Naturally one can take the closure of the universal Scorza correspondence
in $\calS_{g,2}^+$, and study the fibers over the theta-null divisor, i.e.~the
strict transform of the Scorza correspondence as the curve acquires a theta-null.
While in general not much is known about the geometry of curves with a theta-null, a test case is provided by hyperelliptic curves (which are in fact characterized by the vanishing of a certain collection of theta-nulls \cite{mumfordbooktheta2,poor}).

The following conjecture, due to Farkas and Verra, was communicated to us by Gavril Farkas:

\smallskip
\noindent {\bf Conjecture.}
{\it For a  generic hyperelliptic curve $C$ with the hyperelliptic involution $\sigma$,
and a vanishing even theta characteristic $\eta$ on $C$, the limit of the
Scorza correspondence in $C\times C$ scheme-theoretically is the union of
the curve $\lbrace x\times\sigma(x)\mid x\in C\rbrace$, and of the
diagonal with multiplicity two.}

Our main result, theorem \ref{mainthm}, is the proof of this conjecture for the case of $g=3$, for all hyperelliptic  curves of genus $3$ --- in which case each hyperelliptic curve has precisely one vanishing theta-null.

\smallskip
Another motivation for studying the Scorza correspondence is
the problem of constructing complete subvarieties of $\calM_g$. Recall that by
a theorem of Diaz \cite{diaz} (see also \cite{grkr} for a different
new approach) there do not exist complete subvarieties of $\calM_g$ of dimension
larger than $g-2$, while all known explicit examples of complete subvarieties
are of dimension equal to a constant multiple of $\ln g$ (for $g\gg 0$)
(see \cite{hamobook} for more details).

A common theme in constructing such complete subvarieties is by using
an appropriate cover construction. The starting point for such constructions
is the complete curve
$X\subset\calM_3$ constructed explicitly by Zaal \cite{zaal} (the existence of such
a curve follows a priori from the existence of the projective Satake compactification
$\calM_3^{Sat}$ such that the boundary $\partial\calM_3^{Sat}$ is codimension two,
so that cutting $\calM_3^{Sat}\subset\PP^N$ by sufficiently general
hypersurfaces yields a curve that can be made to avoid $\partial\calM_3^{Sat}$ ---
and which is thus contained in $\calM_3$). By considering
a suitable cover of $X$ one thus obtains a complete surface in $\calM_7$, and this
can be iterated further.

Alternatively, one could try to use the Scorza
correspondence (this idea is due to Gavril Farkas): indeed, consider the preimage of $X$ in
$\calS_3^+$, and consider the universal Scorza correspondence $S$ over it. For
any $(C,p,q)\in S\subset\calS_{3,2}^+$ if $p$ and $q$ are distinct, the double cover
of $C$ branching at $p$ and $q$ is a smooth curve of genus $6$. Thus if the Scorza
correspondence were to never intersect the diagonal of $C\times C$ (including over
the hyperelliptic locus), by varying $C\in X$ and $p,q\in S$ one would get
a complete surface in $\calM_6$. Thus our main result shows that this cannot be made
to work, as indeed the complement of the hyperelliptic locus $\calM_3\setminus\calH_3$
is affine  and does not contain any complete curves. Note that in characteristic $p>2$
a complete surface in $\calM_6$ was constructed by Zaal in \cite{zaal2}.

\smallskip
The method of our proof is by using the difference map $C\times C\to C-C\subset J(C)$,
and using the explicit description of the image as the base locus of the
linear subsystem $\Gamma_{00}\subset|2\Theta|$ on the Jacobian. Using
the explicit knowledge of this linear system and suitable identities for theta
constants then yield the defining equations for the strict transform of the
Scorza correspondence over the hyperelliptic curves, which allows us to prove the main result.

\bigskip
{\it Acknowledgements:} We would like to thank Gavril Farkas for bringing the problem to
our attention and for insightful conversations about the geometry of spin curves. We are grateful to Igor Dolgachev for interesting discussions about the classical geometry of curves of genus 3.

\section{Theta constants as modular forms}
In this section we gather some definitions and results about theta constants and
the rings of modular forms.

We denote by $\HH_g$ the Siegel upper half-space consisting of $g\times g$ complex
symmetric matrices $\tau$ with positive-definite imaginary part.
For $\tau\in\HH_g$ we denote by $A_\tau:=\CC^g/\ZZ^g\tau+\ZZ^g$ the abelian
variety corresponding to $\tau$. We denote by
$$
 \theta(\tau,z):=\sum\limits_{n\in\ZZ^g} \exp(\pi in^t\tau n+2\pi i n^tz)
$$
the Riemann theta function $\theta:\HH_g\times\CC^g\to\CC$.
The zero locus in $z$ of the theta function is invariant under translating $z$ by
$\ZZ^g\tau+\ZZ^g$, and thus defines the theta divisor $\Theta_\tau\subset A_\tau$,
which gives a principal polarization on $A_\tau$.

For $\e,\de\in(\ZZ/2\ZZ)^{2g}$ (which we think of as a two-torsion point $m=(\e\tau+\de)/2$on $A_\tau$) we denote by $\theta_m(\tau,z):=\tc\e\de(\tau,z)$ the theta
function with characteristics --- which, up to an easy exponential factor,
is the same as $\theta(\tau,z+m)$. It is known that $h^0(A_\tau,2\Theta_\tau)=2^g$ for any $\tau\in\HH_g$ and
that the basis for sections is given by theta functions of the second
order: for $\e\in(\ZZ/2\ZZ)^g$ we denote
$$
\Theta[\e](\tau,z):=\tc{\e}{0}(2\tau,2z)=\sum\limits_{n\in\ZZ^g} \exp\left(2\pi i
\left(n+\e/2\right)^t\left(\tau(n+\e/2)+2z\right)\right)
$$

Since for any two-torsion point $m\in A_\tau[2]$ the square $\theta_m^2(\tau,z)$
is a section of $2\Theta_\tau$ (the shift $2m$ is zero), it is a linear combination
of theta functions of the second order, given explicitly by Riemann's bilinear
addition formula:
\begin{equation}\label{bilinear}
 \tc\e\de(\tau,z)^2=\sum\limits_{\sigma\in(\ZZ/2\ZZ)^g} (-1)^{\sigma\cdot\de}
 \Theta[\sigma](\tau,z)\Theta[\sigma+\e](\tau,0).
\end{equation}

\smallskip
The symplectic group $\Sp(2g,\ZZ)$ acts on $\HH_g$ by
$$
 \begin{pmatrix} A&B\\ C&D \end{pmatrix}\circ\tau=(A\tau+B)(C\tau+D)^{-1},
$$
where we represent an element $\gamma\in\Sp(2g,\ZZ)$ as four $g\times g$ blocks.
The quotient under this action is the moduli space of principally polarized
abelian varieties (ppav) $\calA_g=\HH_g/\Sp(2g,\ZZ)$. For future use we also
denote $\calU_g\to\calA_g$ the universal family of ppav over $\calA_g$ (which
is a quotient of $\HH_g\times\CC^g$ under a suitable action of
$\Sp(2g,\ZZ)\rtimes\ZZ^{2g}$).

\begin{df}
For a finite index subgroup $\Gamma\subset\Sp (2g,\ZZ)$ and an integer $r$,  a multiplier
system of weight $r/2$ is a map $v:\Gamma\to \CC^*$, such that the
map
$$
  \gamma\mapsto v(\gamma)\det(C\tau+D)^{r/2}$$
satisfies the cocycle condition for every $\sigma\in\Gamma$ and
$\tau\in\HH_g$ (note that the function $\det(C\tau+D)$ possesses a
square root).
\end{df}
Clearly a multiplier system of integral weight is a
character.

\begin{df} A function $f:\HH_g\to \CC$ is called a modular form of weight ${r/2}$
with multiplier $v$, with respect to a finite index subgroup $\Gamma\subset\Sp(2g,\ZZ)$ if
\begin{equation}\label{transform}
  f(\gamma\cdot\tau)=v(\gamma)\det(C\tau+D)^{r/2} f(\tau)\qquad\qquad\forall
  \tau\in\HH_g,\forall\gamma\in\Gamma,
\end{equation}
and if additionally $f$ is holomorphic at all cusps of $\HH_g/\Gamma$.
\end{df}
We denote by $[\Gamma, r/2, v]$ the finite dimensional space of modular forms.
We denote $L$ the bundle of modular forms of weight 1 on $\calA_g$, so that
by definition a modular form of weight $r$ (with trivial multiplier) with respect
to the entire group $\Sp(2g,\ZZ)$ is a section of $L^{\otimes r}$: so we have
$[\Sp(2g,\ZZ),r,1]=H^0(\calA_g,L^{\otimes r})$.

The restrictions of theta functions with characteristics (resp.~theta functions
of the second order) to $z=0$ are called theta constants with characteristics
(resp.~of the second order). It is known that $\theta_m(\tau,0)$ are modular
forms of weight one half with respect to a certain finite index normal subgroup
$\Gamma_g(4,8)\subset\Sp(2g,\ZZ)$ (the action of $\Gamma_g(2)$ adds certain
fourth roots of unity, while the action of full $\Sp(2g,\ZZ)$ adds certain
complicated eighth roots of unity and
permutes characteristics under an affine action), while theta constants of
the second order are modular forms, also of weight one half, with respect
to a bigger subgroup $\Gamma_g(2,4)$. Using the same method as in \cite{top},
we can prove that the commutator of the group  $\Gamma_g(2,4)$ is the group
$\Gamma_g(2,4, 8)$.  Moreover its characters appear in the transformation formula of
theta constants, see \cite{smlevel2}.
We denote $\calA_g(2,4):=\HH_g/\Gamma_g(2,4)$ the finite Galois cover of
$\calA_g$ corresponding to the level $(2,4)$ subgroup. Theta constants of the
second order are known to define a morphism
$$
  Th:\calA_g(2,4)\to\PP^{2^g-1};\qquad \tau\mapsto\lbrace\Theta[\e](\tau,
  0)\rbrace_{\e\in(\ZZ/2\ZZ)^g},
$$
that is generically 1-to-1, finite, and is
known to be an embedding  when  $g\leq 3$ --- see \cite{runge1,smsubrings}
(the similar map of $\calA_g(4,8)$ given by theta constants with characteristics is
known to be an embedding for all $g$). Letting $v$  be the  multiplier of the theta constants of
the second order, we  set
$$
  M(\Gamma_g(2,4) , v):=\bigoplus_{r=0}^{\infty} [\Gamma_g(2,4), r/2, v^r].
$$
This is an integrally closed ring; moreover, for $g\leq 3$
$$
  M(\Gamma_g(2,4) , v)=\CC\left[\Theta[\sigma](\tau,0)\right],
$$
i.e.~theta constants of the second order generate this ring of modular forms.
We refer to \cite{igusabook,runge1,smlevel2} for more details on theta
constants and associated maps.

\section{The linear system $\Gamma_{00}$}
In this section we recall the definition of the linear system $\Gamma_{00}$
introduced by van Geemen and van der Geer \cite{vgvdg}, and some results about it.

For a curve $C\in\calM_g$ let $A:C\hookrightarrow J(C)$ be the Abel-Jacobi
embedding of the curve into its Jacobian. Let $C-C\subset J(C)$ denote the
(singular) surface that is the image of the map $C\times C\to J(C)$ given
by $(p,q)\mapsto A(p)-A(q)$ (which blows down the diagonal). Recall
that the degree of the map $C\times C\to C-C$ is equal to 1 for
non-hyperelliptic curves, and 2 for hyperelliptic curves.

For any ppav $A_\tau$ the linear subsystem $\Gamma_{00}\subset|2\Theta_\tau|$ is
defined by van Geemen
and van der Geer  \cite{vgvdg} to consist of those sections vanishing at the
origin $z=0$ to order at least 4. For a Jacobian, $\Gamma_{00}$ coincides with
the vector subspace of $H^0(A_\tau,2\Theta_\tau)$ consisting of sections
vanishing along $C-C$, see \cite{vgvdg}. Welters \cite{weltersC-C} showed that
as a set the base locus of $\Gamma_{00}$ on a Jacobian is equal to $C-C$
(for $g=4$ together with two exceptional points corresponding to the
$g_1^3$'s on the curve). Izadi \cite{izadiC-C} showed that the base locus
of $\Gamma_{00}$ on a non-hyperelliptic Jacobian is equal to $C-C$ as a scheme.

For any indecomposable ppav $A$ the dimension of $\Gamma_{00}$ is equal
to $2^g-g(g+1)/2-1$, and a set of generators for $\Gamma_{00}$ is constructed in \cite{vgvdg} in the
following way (see \cite{grsmtwopoint} for a different approach).
Let $I\in I(\overline{Th(\calA_g(2,4))})$ be a homogenous polynomial in theta
constants of the second order vanishing identically. van Geemen and van der Geer \cite{vgvdg}
show (using the heat equation for the theta function) that the
expression
\begin{equation}\label{F}
  F_I:=\sum\limits_{\e\in(\ZZ/2\ZZ)^g} \frac{\partial I}{\partial\Theta[\e]
  (\tau,0)}\Theta[\e](\tau,z)
\end{equation}
then gives an element in $\Gamma_{00}$, and moreover they prove that such $F_I$
for all $I$ in the ideal generate the linear system $\Gamma_{00}$ for any
indecomposable ppav.


\section{The surface $C-C$ in genus 3}
In this section we specialize the preceding discussion to write down in genus 3
the defining equation for the universal family of surfaces $C-C\subset\calU_3$.

In genus 3 the image $\overline{Th(\calA_3(2,4))}$ is a hypersurface in $\PP^7$.
The equation for this hypersurface, of degree 16, was
first obtained by Schottky (perhaps Frobenius), and we quote the formula from
\cite{vgvdg}. For the degree four polynomials in theta constants defined as
$$
 r_1:=\tc{0&0&0}{0&0&0}\tc{0&0&0}{1&0&0}\tc{0&0&0}{0&1&0}\tc{0&0&0}{1&1&0}
$$
$$
 r_2:=\tc{0&0&1}{0&0&0}\tc{0&0&1}{1&0&0}\tc{0&0&1}{0&1&0}\tc{0&0&1}{1&1&0}
$$
$$
 r_3:=\tc{0&0&0}{0&0&1}\tc{0&0&0}{1&0&1}\tc{0&0&0}{0&1&1}\tc{0&0&0}{1&1&1}
$$
the Riemann quartic addition theorem gives $r_1-r_2-r_3=0$, which implies
\cite[formula (7)]{vgvdg}
\begin{equation}\label{I3}
 I_3(\tau):=r_1^2+r^2_2+r_3^2-2r_1^2r_2^r-2r_1^2r_3^2-2r_2^2r_3^2=0.
\end{equation}

Using Riemann's bilinear addition formula (\ref{bilinear}) one can rewrite $I_3$ as a
polynomial of degree 16 in theta constants of the second order
$\Theta[\sigma](\tau)$. It then turns out that this polynomial on $\PP^7$ is
in fact independent of the choice of a syzygetic subspace used to define $r_i$,
and is the one defining equation for $\overline{Th(\calA_3(2,4))}\subset\PP^7$.

An equivalent expression for $I_3(\tau)$, manifestly invariant under the action
of $\Sp(2g,\ZZ)$, was obtained by
Igusa \cite{igusachristoffel}, who showed that up to a constant factor
$$
 I_3(\tau)=2^3\sum\limits_{\e,\de\in(\ZZ/2\ZZ)^g}\tc\e\de^{16}(\tau,0)-\left(
 \sum\limits_{\e,\de\in(\ZZ/2\ZZ)^g}\tc\e\de^{8}(\tau,0)\right)^2.
$$

Thus from the results of van Geemen and van der Geer it follows that for
any indecomposable ppav $A_\tau$ of dimension 3 the one-dimensional linear
system $\Gamma_{00}$ is generated by the function
$$
 F(\tau,z):=F_{I_3}(\tau,z).
$$
To write $F$ explicitly, note from \cite{vgvdg} that instead of taking
partial derivatives in (\ref{F}) it is possible to take partial
derivatives with respect to squares of theta constants with
characteristics, and then sum over all even characteristics, i.e.~up to
a constant factor we have
\begin{equation}\label{F2}
 F(\tau,z)=\sum\limits_{\e,\de\in(\ZZ/2\ZZ)^6}\frac{\partial I_3}{\partial
 \tc\e\de(\tau,0)^2}\tc\e\de(\tau,z)^2
\end{equation}
and using formula (\ref{I3}) this can be written out explicitly.

\section{The Scorza correspondence in genus 3}
In this section we describe analytically the (universal) Scorza correspondence
in genus 3.

Recall that the Scorza correspondence for $(C,\eta)\in\calS_g^+$ for $\eta$
non-effective is the set of pairs of points $(x,y)\in C\times C$ such
that $\eta+x-y$ is an effective divisor. The choice of a spin structure
$\eta$ fixes a choice of a symmetric principal polarization $\Theta_\eta$ on the
Jacobian $J(C)$ such that $\Theta_\eta|_{A(C)}=\eta$. The
image of the Scorza correspondence under the map $C\times C\longrightarrow C-C\subset
J(C)$ is then the set of points $x-y\in C-C$ such that $\theta_\eta(\tau,x-y)=0$,
where $\tau$ is the period matrix of $C$. Thus the image of the Scorza
correspondence in $J(C)$ is the intersection of
$C-C$ with the theta divisor. Using the above
description of $C-C$ as the zero locus of $F(\tau,z)$ analytically
we can write down the Scorza curve in $A_\tau=J(C)$ for $C$
non-hyperelliptic as
\begin{equation}\label{scorza}
 S_\tau:=(C-C)\cap\Theta_\tau=\lbrace z\in A_\tau\mid F(\tau,z)=
 \theta_\eta(\tau,z)=0\rbrace
\end{equation}
In what follows, working analytically over the Siegel space, to simplify notations
we will choose the theta characteristic to be zero (since the deck transformations
of the cover $\calS_g^+\to\calM_g$ act transitively on the set of $\eta$, this
will be enough).

Note that we can define the Scorza variety $S_\tau\subset A_\tau$ by the same
formula for any $\tau\in\HH_3$, even if the corresponding curve is reducible or
hyperelliptic. We then see that $S_\tau$ is a curve unless the
irreducible divisor $\Theta_\tau\subset A_\tau$ coincides with the surface
$C-C$, which we know is given in $A_\tau$ by the equation $\lbrace
F(\tau,z)=0\rbrace$. Since $0\in C-C$, for
this to happen we must have in particular
$0\in\Theta_\tau$, i.e.~we must have $\theta(\tau,0)=0$. Thus this
happens precisely on the hyperelliptic locus (with the
appropriate choice of the theta characteristic, i.e.~on $\theta_{\rm null}\subset
\calS_3^+$). In this case
$\theta(\tau,z)$ is an even function of $z$ vanishing at 0, so it
vanishes to the second order at $z=0$, its square vanishes to the fourth order, and thus $\theta^2(\tau,z)$ is the
non-zero generator of $\Gamma_{00}$. We thus proved the following

\begin{lm}
On the hyperelliptic locus given by $\theta(\tau,0)=0$ the function $F$ vanishes
on the theta divisor: this is to say that we have
$$
 \theta(\tau,0)=\theta(\tau,z)=0\Longrightarrow F(\tau,z)=0.
$$
\end{lm}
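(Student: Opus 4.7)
The plan is to argue that on the hyperelliptic locus $\theta^2(\tau,z)$ itself is an element of the one-dimensional linear system $\Gamma_{00}$, and hence must coincide (up to scalar) with its generator $F(\tau,z)$. The desired implication then follows immediately.

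More precisely, I would first observe that if $\theta(\tau,0)=0$, then because $\theta(\tau,z)$ is an even function of $z$, its Taylor expansion at the origin contains only even-order terms; vanishing at $z=0$ therefore forces vanishing to order at least $2$. Consequently $\theta^2(\tau,z)$ vanishes at $z=0$ to order at least $4$. Since $\theta^2(\tau,z)$ is a section of $2\Theta_\tau$, this exhibits $\theta^2(\tau,\cdot)$ as a (nonzero, for generic such $\tau$) element of $\Gamma_{00}\subset|2\Theta_\tau|$.

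Next, I would invoke the dimension count recalled in the previous section: for any indecomposable ppav of dimension $g$ one has $\dim\Gamma_{00}=2^g-g(g+1)/2-1$, which for $g=3$ equals $8-6-1=1$. Hyperelliptic Jacobians of genus $3$ are indecomposable, so $\Gamma_{00}$ is one-dimensional on the hyperelliptic locus and generated by $F(\tau,z)$. Hence there exists a scalar $c(\tau)$ such that
\begin{equation*}
F(\tau,z)=c(\tau)\,\theta^2(\tau,z)
\end{equation*}
for every $\tau$ in the hyperelliptic locus and every $z\in\CC^g$. In particular, any $z$ with $\theta(\tau,z)=0$ satisfies $F(\tau,z)=0$, which is exactly the claim.

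The only step that requires any care is the reduction to indecomposable ppav: the divisor $\{\theta(\tau,0)=0\}\subset\HH_3$ also contains decomposable loci (e.g.\ products with elliptic factors), where $\Gamma_{00}$ may be larger and the generation statement for $F$ need not apply pointwise. On the Zariski-dense open subset of this divisor consisting of indecomposable ppav (which contains the genuine hyperelliptic Jacobians that we care about) the argument above applies directly, and the implication $\theta(\tau,0)=\theta(\tau,z)=0\Rightarrow F(\tau,z)=0$ then extends to the closure by continuity of all the functions involved. This is essentially the main — and only mildly delicate — point of the proof.
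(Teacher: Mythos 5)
Your argument is correct and is essentially the paper's own proof: both show that when $\theta(\tau,0)=0$ the even function $\theta(\tau,z)$ vanishes to order $2$ at the origin, so $\theta^2(\tau,z)$ lies in $\Gamma_{00}$, which for an indecomposable genus $3$ ppav is one-dimensional and generated by $F$, forcing $F$ to be proportional to $\theta^2(\tau,z)$ and hence to vanish on the theta divisor. Your extra remark about extending from the indecomposable locus to its closure by continuity is a harmless refinement of the same argument, not a different route.
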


\section{ An expression for $F(\tau, z)$}
We now consider the embedding of the universal Kummer variety
of dimension 3:
$$
  UTh:\calU_3(2,4)/\pm 1\hookrightarrow \PP^7\times\PP^7
$$
given by
$$
 (\tau,z)\mapsto\lbrace\Theta[\e](\tau,
 0)\rbrace_{\e\in(\ZZ/2\ZZ)^g}\times \lbrace\Theta[\e](\tau,
 z)\rbrace_{\e\in(\ZZ/2\ZZ)^g}
$$

The defining ideal $\calI$ for the image of $UTh$ is known quite explicitly.
Indeed, let $p_1$ and $p_2$ be the two projections from $\PP^7
\times\PP^7$ onto the factors. Then the map $p_1\circ UTh$ factors
through $\calA_3(2,4)$, where it is simply $Th$, and thus $I_3$
is the single defining equation for the closure of $p_1\circ UTh(\calU_3(2,4)$
in $\PP^7$.

On the other hand, for a fixed abelian variety (which, since $Th
=p_1\circ UTh$ is an embedding, means for $p_1\circ UTh$ fixed),
the map $p_2\circ UTh$ is the map $|2\Theta_\tau|:A_\tau/\pm
\hookrightarrow \PP^7$, equations for which are known by work
of Mumford \cite{mumfordequations1,mumfordequations3,mumfordequations3}.

In fact these equations are polynomial equations for $\Theta[\sigma](\tau,z)$
with coefficients themselves being polynomials in $\Theta[\e](\tau,0)$ ---
thus they are explicit polynomials on $\PP^7\times\PP^7$.

The above lemma then yields the following
\begin{prop}
The function $F(\tau,z)$ considered as a bihomogenous polynomial (of bidegree
$(15,1)$) on $\PP^7\times\PP^7$, lies in the radical of the ideal
generated by $\calI$ and the functions $\theta(\tau,0)$ and $\theta^2(\tau,z)$.
\end{prop}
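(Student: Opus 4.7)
The plan is to deduce the proposition directly from the preceding lemma by Hilbert's Nullstellensatz. First I would rewrite $F(\tau,z)$ as a genuine bihomogeneous polynomial on $\PP^7\times\PP^7$ of bidegree $(15,1)$. This is immediate from formula (\ref{F}) applied to $I_3$: the partial derivative $\partial I_3/\partial\Theta[\e](\tau,0)$ is a polynomial of degree $15$ in the second-order theta constants $\Theta[\sigma](\tau,0)$, and the factor $\Theta[\e](\tau,z)$ contributes degree $1$ in the second-factor coordinates. Similarly, I would replace $\theta(\tau,0)$ and $\theta^2(\tau,z)$ by their bihomogeneous polynomial expressions coming from (\ref{bilinear}) applied with characteristic $\e=\de=0$: namely $\theta(\tau,0)^2=\sum_\sigma\Theta[\sigma](\tau,0)^2$, which is a polynomial in the first-factor coordinates, and $\theta^2(\tau,z)=\sum_\sigma\Theta[\sigma](\tau,z)\Theta[\sigma](\tau,0)$, which is bidegree $(1,1)$. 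Since passing to the radical of an ideal does not distinguish a polynomial from its square, working with these squared expressions is harmless.

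Next I would argue the set-theoretic vanishing. Let $\calJ$ denote the ideal generated by $\calI$ together with the polynomials just described, and let $V(\calJ)\subset\PP^7\times\PP^7$ be its zero locus. Since $Th:\calA_3(2,4)\hookrightarrow\PP^7$ is an embedding in genus $3$, and the equations of Mumford cut out the universal Kummer along the second factor, the zero locus $V(\calI)$ is precisely the closure of the image of $UTh:\calU_3(2,4)/\pm 1\hookrightarrow\PP^7\times\PP^7$. Hence any point of $V(\calJ)$ is a limit of images $UTh(\tau,z)$ for $(\tau,z)\in\HH_3\times\CC^3$ with $\theta(\tau,0)\to 0$ and $\theta(\tau,z)\to 0$. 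On such honest pairs the preceding lemma gives $F(\tau,z)=0$, and since $F$ is a polynomial (hence continuous) on $\PP^7\times\PP^7$, the vanishing persists on the closure. Therefore $F$ vanishes identically on $V(\calJ)$, and Hilbert's Nullstellensatz yields $F\in\sqrt{\calJ}$, which is exactly the claim.

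The only delicate point, and the one I would think of as the main obstacle, is the closure argument: one must make sure that every geometric point of $V(\calI)$ comes from (or is a limit of) a genuine pair $(\tau,z)\in\HH_3\times\CC^3$, so that the preceding lemma actually applies. In genus $3$ this is clean because $Th$ is a closed embedding and the equations of $UTh$ are explicit, so $V(\calI)$ has no extraneous components; combined with continuity of the polynomial $F$, the vanishing on the image extends to the whole variety $V(\calJ)$.
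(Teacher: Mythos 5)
Your overall route is the same as the paper's: the paper proves this proposition in essentially one sentence, declaring it a reformulation of the preceding lemma (so that, $F$ vanishing on the common zero locus, the Nullstellensatz places it in the radical), and it makes exactly your observation that one must work with $\theta^2(\tau,0)=\sum_\sigma\Theta[\sigma](\tau,0)^2$, which unlike $\theta(\tau,0)$ is a polynomial in the second-order theta constants --- the squaring being harmless inside a radical. So in spirit your proposal matches the paper.

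However, the step you yourself flag as delicate is, as written, a non sequitur. A point of $V(\calJ)$ lying in the image of $UTh$ does come from an honest pair $(\tau,z)$ with $\theta(\tau,0)=\theta(\tau,z)=0$ exactly (via the bilinear relation), and there the lemma gives $F=0$. But for a point of $V(\calJ)$ in $\overline{\op{im}\,UTh}\setminus\op{im}\,UTh$ you argue that it is a limit of image points with $\theta(\tau,0)\to 0$ and $\theta(\tau,z)\to 0$ and then apply the lemma ``on such honest pairs'' --- the lemma requires exact vanishing, and mere convergence of the theta values to zero gives no control whatsoever on $F$ along the sequence. Continuity only yields that $F$ vanishes on the closure of $V(\calJ)\cap\op{im}\,UTh$; to invoke the Nullstellensatz you need $F$ to vanish on all of $V(\calJ)$, so you must either show that $V(\calJ)$ has no components contained in the boundary of the image (the closures of $Th$ and $UTh$ do contain boundary points, since $\calA_3(2,4)$ is not compact and $Th$ is an embedding of the open part only, not a closed embedding), or treat such boundary components by a separate degeneration argument. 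Your appeal to ``$V(\calI)$ has no extraneous components'' addresses $V(\calI)$, not the intersection $V(\calJ)=V(\calI)\cap\{\theta^2(\tau,0)=\theta^2(\tau,z)=0\}$, which could a priori acquire components inside that boundary. To be fair, the paper's one-line proof glosses over the same point; but once you decide to spell the argument out, this is the gap that actually needs closing.
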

This proposition is of course simply a reformulation of the lemma; note
that while by Riemann's bilinear addition formula $\theta^2(\tau,0)$ is a polynomial
in theta constants of the second order, $\theta(\tau,0)$ itself is not.

We will now determine explicitly an expression for $F$ in terms of the
generators of this ideal, using the explicit formula
(\ref{F2}) for $F$.
Let $V_i\subset(\ZZ/2\ZZ)^6$ for $i=1,2,3$ be the set of four theta
characteristics appearing in the product $r_i$. We need to compute the partial
derivatives of (\ref{I3}) with respect to various theta constants.
Obviously for $m\not\in (V_1\sqcup V_2\sqcup V_3)$ such a partial derivative is
zero, while for $m\in V_i$ we have
$$
\frac{\partial I_3}{\partial \theta_m^2(\tau,0)}=2\left(2r_i^2-
 \sum\limits_{j=1}^3r_j^2\right)\frac{r_i^2}{\theta_m^2(\tau,0)}.
$$

Summing over all $m$ yields
$$F(\tau,z)= \sum_{i=1}^3 2\left(2r_i^2-
 \sum\limits_{j=1}^3r_j^2\right)\sum_{m\in V_i}\frac{r_i^2}{\theta_m^2(\tau,0)}\theta_m^2(\tau,z) $$

We want to evaluate $F(\tau,z)$ on the hyperelliptic locus, which is to say on
$\theta_{\rm null}\subset\calS_3^+$. Lifting to $\HH_3$ it means we are working on the
locus of $\tau$ such that $\theta(\tau,0)=\tc{0&0&0}{0&0&0}(\tau,0)=0$.  In this case of
course we have $r_1=0$, which implies $r_2+r_3=0$ and thus $r_2^2-r_3^2=(r_2-r_3)(r_2+r_3)=0$.
Since this product vanishes on the locus $\theta(\tau,0)$, it implies  that
the modular form $r_2^2-r_3^2$ is divisible
by $\theta(\tau,0)$.

\begin{rem}\label{notdivisible}
For future use we note that $r_2^2-r_3^2$ is not divisible
by $\theta(\tau,0)^2$: indeed, we know that $r_2+r_3=r_1$ is divisible by $\theta(\tau,0)$.
If $r_2-r_3$ were also divisible by $r_1$, this would imply that $r_2$ and $r_3$ were both
divisible by $r_1$, which is of course false as zero loci of different theta constants are different.
On the other hand, if $r_2+r_3=r_1$ were
divisible by $\theta(\tau,0)^2$, then $r_1$ would be divisible by
$\theta(\tau,0)^2$, which is also not the case.
\end{rem}

This computation allows us to write $F$ explicitly. To simplify the formulas, we introduce notation
for various expressions appearing in the formulas for the derivatives of $I$ with respect to
various $\theta_m^2(\tau,0)$. To this end, for any $m\in V_i,\, i=2,3$ we set
$$
 A_m(\tau):=(-1)^i 2(r_2-r_3)\frac{r_1}{\theta(\tau,0)}\cdot\frac{r_i^2}{\theta_m^2(\tau,0)};
$$
for any  $m\in V_1\setminus0$ we set
$$
  B_m(\tau):= 2(r_1^2- r_2^2- r_3^2) \frac{r_1^2}{\theta^2(\tau,0)\theta_m^2(\tau,0)},
$$
and finally we set
$$
  C(\tau):= 2(r_1^2- r_2^2- r_3^2) \frac{r_1^2}{\theta^2(\tau,0)}.
$$
In this notation we then have by adding all the partial derivatives of $I$
\begin{equation}\label{fr}
\begin{aligned}
  F(\tau,z)&=\theta(\tau,0)\sum\limits_{m\in V_2\sqcup V_3}A_m(\tau)\theta_m^2(\tau,z)\\
  &+\theta^2(\tau,0)\sum\limits_{m\in V_1\setminus 0} B_m(\tau)\theta_m^2(\tau,z)+C(\tau)\theta^2(\tau,z)
\end{aligned}
\end{equation}
with $ A_m(\tau)$ not divisible by  $\theta(\tau,0)$. Denoting the first two sums $A(\tau,z)$ and $B(\tau,z)$ respectively, we can write this as
\begin{equation}\label{frsimple}
  F(\tau,z)=\theta(\tau,0)A(\tau,z)+\theta^2(\tau,0)B(\tau,z)+C(\tau)\theta^2(\tau,z).
\end{equation}
Recall that $F(\tau,z)=0$ within $A_\tau=J(C)$ is the defining equation for the surface $C-C\subset J(C)$, which in particular
contains zero, so that we must have
$$
 0=\theta(\tau,0)A(\tau,0)+\theta^2(\tau,0)(B(\tau,0)+C(\tau))
$$
for all $\tau$, which implies that
$$
 A(\tau,0)=-\theta(\tau,0)(B(\tau,0)+C(\tau))
$$
for all $\tau$, and in particular we get
\begin{lm}\label{Avanishes}
The expression $A(\tau,0)$ defined above vanishes on the component of the hyperelliptic
locus where $\theta(\tau,0)=0$: we have $\theta(\tau,0)=0\Longrightarrow A(\tau,0)=0$.
\end{lm}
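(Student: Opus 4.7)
The plan is to exploit the fact that $F\in\Gamma_{00}$ by construction, which forces $F(\tau,0)=0$ identically on $\HH_3$ (in fact $F$ vanishes to order at least $4$ at the origin in $z$, but we only need the value). Substituting $z=0$ into the decomposition (\ref{frsimple}) will convert this trivial-looking vanishing into an algebraic relation among the three pieces $\theta(\tau,0)A(\tau,0)$, $\theta^2(\tau,0)B(\tau,0)$, and $C(\tau)\theta^2(\tau,0)$, from which the desired divisibility will be read off directly.

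Concretely, setting $z=0$ in (\ref{frsimple}) yields the identity
$$0 \;=\; \theta(\tau,0)\,A(\tau,0) \;+\; \theta^2(\tau,0)\bigl(B(\tau,0)+C(\tau)\bigr),$$
valid for all $\tau\in\HH_3$. Since the ring of modular forms is an integral domain and $\theta(\tau,0)$ is not identically zero, one factor of $\theta(\tau,0)$ can be cancelled, producing the cleaner identity
$$A(\tau,0) \;=\; -\,\theta(\tau,0)\bigl(B(\tau,0)+C(\tau)\bigr).$$
The right-hand side manifestly vanishes on the locus $\{\theta(\tau,0)=0\}$, which is precisely the statement of the lemma.

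The only technical point that needs verification is that $B(\tau,0)+C(\tau)$ is a genuine holomorphic modular form rather than a rational expression with a pole along $\theta(\tau,0)=0$; otherwise the cancellation above would be purely formal. Inspection of the definitions shows that the apparent denominators $\theta^2(\tau,0)$ in $B_m$ and $C$ are cancelled by the factor $r_1^2$ in the numerator: since $\theta(\tau,0)$ is one of the four theta constants making up $r_1$, the ratio $r_1^2/\theta^2(\tau,0)$ is a polynomial in the remaining theta constants, so $B_m$ and $C$ are honest modular forms and the division by $\theta(\tau,0)$ is legitimate. Given how directly everything reduces to this observation (essentially already carried out in the paragraph preceding the lemma), I do not anticipate any real obstacle; the main thing to watch is merely the bookkeeping of which expressions are polynomial and which carry formal denominators.
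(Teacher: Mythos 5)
Your proposal is correct and takes essentially the same route as the paper: both substitute $z=0$ into (\ref{frsimple}), use the vanishing of $F$ at the origin (the paper cites $0\in C-C$, you cite $F\in\Gamma_{00}$ --- the same underlying fact), cancel one factor of $\theta(\tau,0)$ to obtain $A(\tau,0)=-\theta(\tau,0)\bigl(B(\tau,0)+C(\tau)\bigr)$, and read off the conclusion. Your added check that $B_m$ and $C$ are genuinely holomorphic, since $r_1^2/\theta^2(\tau,0)$ is a polynomial in the remaining theta constants, is a correct and harmless elaboration of what the paper leaves implicit.
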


\begin{rem}
Note that we could have formulas (\ref{fr}) and (\ref{frsimple}) satisfied with other $ A_m(\tau),
B_m(\tau), C(\tau)$, as the theta functions $\theta_m^2(\tau,z)$ appearing there are not linearly
independent. However, the conclusion above about the vanishing of $A(\tau,0)$ on the hyperelliptic
locus is independent of this assumption.
\end{rem}

\section{The Scorza correspondence over the hyperelliptic locus in genus 3}
Recall that the Scorza correspondence within $A_\tau$ is given by the equations
$$
S_\tau=\lbrace\theta(\tau,z)=F(\tau,z)=0\rbrace.
$$
Substituting here the expression for $F(\tau,z)$ from formula (\ref{frsimple}) and
noticing that the term with $C(\tau)$ vanishes on the theta divisor we get equivalently
$$
 S_\tau=\lbrace 0=\theta(\tau,z)=\theta(\tau,0)(A(\tau,z)+\theta(\tau,0)B(\tau,z)\rbrace.
$$
Away from the component of the hyperelliptic locus where $\theta(\tau,0)=0$ we can thus take out that
factor from the second equation. Thus the limit of the Scorza correspondence (i.e.~the intersection of the closure of the
family of $S_\tau\subset \calU_3$ with the fiber) as $\tau$ approaches the period
matrix $\tau_0$ of a hyperelliptic curve $C_0\in\calH_3$ such that $\theta(\tau_0,0)=0$  is given by
\begin{equation}\label{Stau0}
 S_{\tau_0}=\lbrace 0=\theta(\tau_0,z)=A(\tau_0,z)\rbrace.
\end{equation}
We note that if $A(\tau,z)$ were divisible by $\theta(\tau,0)$, then in the defining equations
for $S_\tau$ we could take out the factor of $\theta^2(\tau,0)$, and thus the formula for $S_{\tau_0}$ would be different:
it would involve $A(\tau,z)/\theta(\tau,0)$ instead of $A(\tau,0)$.

We will now prove that this is not the case:
\begin{prop}
The expression $A(\tau,z)$ is not divisible by $\theta(\tau,0)$: there does not
exist a holomorphic function $D(\tau,z)$ --- which would then be a modular function ---
such that $A(\tau,z)=D(\tau,z)\theta(\tau,0)$. Equivalently, there does not exist a
holomorphic $D$ such that
\begin{equation}\label{frm}
  F(\tau,z)=  \theta^2(\tau,0)D(\tau,z)+C(\tau)\theta^2(\tau,z)
\end{equation}
\end{prop}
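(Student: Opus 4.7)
The plan is to proceed by contradiction: suppose that $A(\tau,z)=\theta(\tau,0)\,D(\tau,z)$ for some holomorphic $D$. Since $\theta(\tau,0)$ depends only on $\tau$, this factorization survives $z$-differentiation, so every second $z$-derivative of $A$ at $z=0$ would vanish identically on the hyperelliptic component $H:=\{\theta(\tau,0)=0\}\subset\HH_3$. I will derive a contradiction by computing these second $z$-derivatives of $A$ at $z=0$ on $H$ using only the defining property of $\Gamma_{00}$, namely that $F(\tau,z)\in\Gamma_{00}$ vanishes at $z=0$ to order at least $4$.

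Because $\theta$, $A$, and $B$ are all even in $z$, their gradients at $z=0$ vanish, and hence $\partial_{z_k}\partial_{z_l}\theta^2(\tau,z)|_{z=0}=2\theta(\tau,0)\,\partial_{z_k}\partial_{z_l}\theta(\tau,0)$. Differentiating (\ref{frsimple}) twice in $z$ at $z=0$ and using $\partial_{z_k}\partial_{z_l}F(\tau,0)=0$ then gives, for all $k,l$,
$$
0=\theta(\tau,0)\,\partial_{z_k}\partial_{z_l}A(\tau,0)+\theta^2(\tau,0)\,\partial_{z_k}\partial_{z_l}B(\tau,0)+2C(\tau)\,\theta(\tau,0)\,\partial_{z_k}\partial_{z_l}\theta(\tau,0).
$$
Dividing by the factor $\theta(\tau,0)$ (the resulting identity of holomorphic functions extends across $H$) and then restricting to $H$ yields
$$
\partial_{z_k}\partial_{z_l}A(\tau,0)\big|_H = -2\,C(\tau)\big|_H\cdot\partial_{z_k}\partial_{z_l}\theta(\tau,0)\big|_H.
$$

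It now suffices to see that the right-hand side does not vanish identically on $H$. From $C(\tau)=4r_2 r_3\,\tilde r_1^{\,2}$, with $\tilde r_1:=r_1/\theta(\tau,0)$ a product of three theta constants indexed by $V_1\setminus\{0\}$, and using that $r_2+r_3=r_1$ is divisible by $\theta(\tau,0)$ (so $r_3=-r_2$ on $H$), one finds $C|_H=-4\tilde r_1^{\,2}r_3^{\,2}$. This is nonzero at a generic $\tau\in H$, since no even theta constant other than $\theta(\tau,0)$ itself vanishes on a generic hyperelliptic curve of genus $3$. The Hessian $\partial_{z_k}\partial_{z_l}\theta(\tau,0)|_H$ is likewise nonvanishing at a generic point of $H$: for a hyperelliptic genus-$3$ curve the vanishing theta characteristic $\eta$ has $h^0(\eta)=2$, so by the Riemann singularity theorem $\theta$ has multiplicity exactly $2$ at the origin, and its tangent quadric --- the Hessian as a quadratic form --- is nonzero. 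Combining these two observations contradicts $\partial_{z_k}\partial_{z_l}A(\tau,0)|_H\equiv 0$, which was forced by the hypothesis $A=\theta(\tau,0)D$.

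The main obstacle I expect is the last ingredient, the nonvanishing of the Hessian of $\theta$ at the origin on a generic point of the hyperelliptic component; everything else is a formal consequence of (\ref{frsimple}) and $F\in\Gamma_{00}$. The equivalence with the alternate formulation (\ref{frm}) is then automatic, since subtracting any identity $F=\theta^2(\tau,0)D(\tau,z)+C(\tau)\theta^2(\tau,z)$ from (\ref{frsimple}) and dividing by $\theta(\tau,0)$ exhibits $A$ as $\theta(\tau,0)$ times a holomorphic function.
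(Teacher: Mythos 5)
Your proof is correct, and it takes a genuinely different route from the paper. The paper argues by expanding the putative $D(\tau,z)$ in theta functions of the second order with modular-form coefficients of weight $13/2$, thereby reducing the statement to an ideal-membership question for $F$ in the polynomial ring of second-order theta constants and theta functions modulo $\theta^2(\tau,0)$ and $\theta^2(\tau,z)$, which is then settled by a Maple computation. You instead exploit the defining property of the construction of $F=F_{I_3}$ --- vanishing to order at least $4$ in $z$ at the origin for every $\tau$ --- to differentiate (\ref{frsimple}) twice, cancel one factor of $\theta(\tau,0)$ by the identity theorem, and obtain on $\{\theta(\tau,0)=0\}$ the relation $\partial_{z_k}\partial_{z_l}A(\tau,0)=-2C(\tau)\,\partial_{z_k}\partial_{z_l}\theta(\tau,0)$, which is incompatible with divisibility of $A$ by $\theta(\tau,0)$ because at a hyperelliptic period matrix with the vanishing theta-null in characteristic zero one has $C(\tau_0)=-4\tilde r_1^2 r_3^2\neq 0$ (only one even theta constant vanishes there) while the Hessian of $\theta$ at the origin is nonzero by the Riemann singularity theorem ($h^0(\eta)=2$, so multiplicity exactly $2$). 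Your ingredients are all standard and available in this setting, and your argument only needs (\ref{frsimple}) to hold with the stated $A$ and $C$ and \emph{some} holomorphic $B$, so it is robust; your handling of the equivalence with (\ref{frm}) is also fine. What the two approaches buy: the paper's is a finite symbolic verification requiring no geometric input beyond the setup but is not human-checkable and sheds no light on why the statement holds, whereas yours is computer-free, conceptual, and yields as a by-product the explicit identity relating the $2$-jet of $A$ at the origin along the theta-null divisor to the tangent cone of the theta divisor, which is consonant with (and essentially explains) the later computation showing $H(\tau_0,z)$ vanishes to high order at the Weierstrass points.
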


\begin{proof}
Suppose for contradiction that we have (\ref{frm}) valid with some
holomorphic function $D(\tau,z)$. Since $C(\tau)\in A(\Gamma_3(2,4), v)$, it
is a polynomial in the theta constants of the second order. Furthermore,
$D(\tau,z)$ for fixed $\tau$ as a function of $z$ is a section of $2\Theta_\tau$
(as are the other terms in (\ref{frm}). Since theta functions of the
second order form a basis of $H^0(A_\tau,2\Theta_\tau)$, we must then have
$$
  D(\tau,z)=\sum_{\e\in(\ZZ/2\ZZ) ^3}D_{\e}(\tau)\Theta[\e](\tau,z)
$$
with $D_{\e}(\tau)$ some modular forms of weight $\frac{13}{2}$ wrt $\Gamma_3(2,4)$,
and hence each $D_\e$ a polynomial of degree 13 in the theta constants of the second order.

Using Riemann's bilinear addition theorem for $\theta^2(\tau,z)$ we would then get from
(\ref{frm}) the identity
$$
  F(\tau,z)= \sum_{\e\in(\ZZ/2\ZZ) ^3} (\theta^2(\tau,0)D_{\e}(\tau)+
  C(\tau)\Theta[\e](\tau,0))\Theta[\e](\tau,z)
$$
$$
  =\sum_{\e\in(\ZZ/2\ZZ)^3}F_{\e}(\tau)\Theta[\e](\tau,z)
$$
with  $F_{\e}(\tau)$ a polynomial of degree 15 in the $\Theta[\e](\tau,0)$.

Using Maple  we verified that in the polynomial ring generated by theta functions
and constants of the second
order, by $\Theta[\e](\tau,z)$ and $\Theta[\e](\tau,0)$, the polynomial $F(\tau,z)$
does not lie in the ideal generated by $\theta^2(\tau,0)$ and $\theta^2(\tau,z)$,
and thus the above expression is impossible.
\end{proof}

We will now investigate the resulting limit of the Scorza correspondence, given by formula
(\ref{Stau0}). We compute by definition
$$
 A(\tau_0,z)=\frac{2r_1(\tau_0)(r_2(\tau_0)-r_3(\tau_0))}{\theta(\tau_0,0)}\left(
 r_2^2\sum\limits_{m\in V_2} \frac{\theta_m^2(\tau_0,z)}{\theta_m^2(\tau_0,0)}-r_3^2\sum\limits_{m\in V_3} \frac{\theta_m^2(\tau_0,z)}{\theta_m^2(\tau_0,0)}\right),
$$
which using $0=r_1(\tau_0)=r_2(\tau_0)+r_3(\tau_0)$ implies that its vanishing locus in $z$ is equal to that of
$$
 H(\tau_0,z):=r_2^2(\tau_0)\sum\limits_{m\in V_2} \frac{\theta_m^2(\tau_0,z)}{\theta_m^2(\tau_0,0)}-r_3^2(\tau_0)\sum\limits_{m\in V_3} \frac{\theta_m^2(\tau_0,z)}{\theta_m^2(\tau_0,0)}.
$$
Recall that by a conjecture of Farkas and Verra we expect to have $S_{\tau_0}=\lbrace
x-\sigma(x)|x\in C_0\rbrace$, where $\sigma$ denotes the hyperelliptic involution on $C_0$. We
first prove the inclusion.

\begin{prop}
The function $H(\tau_0,z)$ vanishes identically for $z=x-\sigma(x)$, and thus $S_{\tau_0}\supset\lbrace x-\sigma(x)|x\in C_0\rbrace$.
\end{prop}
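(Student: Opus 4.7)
The plan is to translate the claim into an identity in theta constants and second-order theta functions at $A(x)$, and then verify that identity using Riemann's addition formulas together with hyperelliptic-specific constraints. I choose the base point of the Abel-Jacobi map to be a Weierstrass point of $C_0$, so that $A(\sigma(x))=-A(x)$ and $x-\sigma(x)=2A(x)$ in $J(C_0)$.

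On the hyperelliptic locus the Riemann quartic identity $r_1=r_2+r_3$ combined with $r_1(\tau_0)=0$ (since $r_1$ contains the factor $\theta(\tau_0, 0)$) forces $r_2(\tau_0)=-r_3(\tau_0)$ and hence $r_2^2(\tau_0)=r_3^2(\tau_0)$. Therefore
\[
 H(\tau_0, z) = r_2^2(\tau_0)\left(\sum_{m\in V_2}\frac{\theta_m^2(\tau_0, z)}{\theta_m^2(\tau_0, 0)} - \sum_{m\in V_3}\frac{\theta_m^2(\tau_0, z)}{\theta_m^2(\tau_0, 0)}\right),
\]
and it suffices to show the bracketed expression vanishes at $z=2A(x)$.

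Next I would apply the Riemann product identity
\[
 \theta_m(\tau, 2A(x))\,\theta_m(\tau, 0) = \sum_\sigma (-1)^{\sigma\cdot\delta_m}\,\Theta[\sigma](\tau, A(x))\,\Theta[\sigma+\epsilon_m](\tau, A(x)),
\]
a specialization of the general theta addition theorem to $u=2A(x)$, $v=0$. Squaring and dividing by $\theta_m^4(\tau_0, 0)$ expresses each ratio $\theta_m^2(\tau_0, 2A(x))/\theta_m^2(\tau_0, 0)$ as a polynomial in the eight values $\{\Theta[\sigma](\tau_0, A(x))\}_\sigma$ with coefficients in theta constants. Applied to $m=[0,0]$, where both factors on the left vanish on the hyperelliptic locus, it yields the auxiliary identity $\sum_\sigma \Theta[\sigma](\tau_0, A(x))^2=0$ for all $x\in C_0$. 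Moreover, $V_2$ and $V_3$ are cosets of a common 2-dimensional subspace of characteristics (their $\delta$-parts filling the hyperplanes $\delta_3=0$ and $\delta_3=1$, with $\epsilon_m=(0,0,1)$ constant on $V_2$ and $\epsilon_m=0$ on $V_3$), so both sums take the form of discrete Fourier transforms over $(\ZZ/2\ZZ)^2$, and the target equality unfolds into a polynomial identity in the $\Theta[\sigma](\tau_0, A(x))$.

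The main obstacle is establishing this final polynomial identity, which has to hold on the Abel-Jacobi image $A(C_0)$ and not merely modulo the single auxiliary relation above. I expect the further input to come from Riemann quartic identities applied to the remaining syzygetic triples of characteristics, combined with the defining ideal of $A(C_0)\subset J(C_0)$ inside the Kummer image. Following the template of the preceding Proposition in the paper, I anticipate that the verification will reduce to a finite polynomial check, done either by a clever invariant-theoretic argument or, failing that, by computer algebra.
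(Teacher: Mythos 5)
Your setup (Weierstrass base point, $x-\sigma(x)=2A(x)$, reduction via $r_2(\tau_0)=-r_3(\tau_0)$ to the vanishing of the bracketed difference of the two sums) is fine, but the proof has a genuine gap exactly where you say ``the main obstacle'' lies: the final identity is never established, and the route you sketch for it does not obviously close. The one concrete relation you extract, $\sum_\sigma \Theta[\sigma](\tau_0,A(x))^2=0$, is just the addition formula $\theta(\tau_0,2u)\theta(\tau_0,0)=\sum_\sigma\Theta[\sigma](\tau_0,u)^2$ multiplied by the vanishing theta-null, so it holds for \emph{every} $u\in\CC^3$ and carries no information about the curve. But the quantity you must kill, $\sum_{m\in V_2}\theta_m^2(\tau_0,z)/\theta_m^2(\tau_0,0)-\sum_{m\in V_3}\theta_m^2(\tau_0,z)/\theta_m^2(\tau_0,0)$, does \emph{not} vanish identically in $z$ (if it did, the limit Scorza locus would be the whole theta divisor, contradicting the intersection-number computation in the main theorem), so its vanishing at $z=2A(x)$ must use relations that hold only on the Abel--Jacobi curve. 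Unlike the previous proposition, where the check takes place modulo an explicitly known ideal (that of the universal Kummer variety together with $\theta^2(\tau,0)$, $\theta^2(\tau,z)$), here you would need usable equations for $A(C_0)$ inside the $|2\Theta|$-embedding of a hyperelliptic Jacobian; you do not supply them, and they are not available off the shelf, so the anticipated ``finite polynomial check by computer algebra'' is not actually set up.

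For comparison, the paper sidesteps ideals entirely by a zero-counting argument on the curve: $H(\tau_0,z)$ is a section of $2\Theta_{\tau_0}$, so $H(\tau_0,2x)$ restricted to $C_0$ is a section of a line bundle of degree $8\cdot 3=24$. Evenness of $H$ forces vanishing to order at least $2$ at each of the $8$ Weierstrass points; then a heat-equation computation turns the second $z$-derivative at $0$ into $\partial_{U\otimes U}(r_2^2-r_3^2)$, which reduces (differentiating the identity $r_1=r_2+r_3$) to $\partial_{U\otimes U}\theta(\tau_0)$ times a factor, and this vanishes because $\theta(\tau_0,\cdot)$ vanishes identically along $C_0\subset\Theta_{\tau_0}=C_0-C_0$. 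Hence order at least $4$ at each Weierstrass point, giving at least $32>24$ zeros and identical vanishing. If you want to salvage your approach, you would need to replace your empty auxiliary relation by genuinely curve-specific input of this kind (e.g.\ precisely the vanishing of $\theta(\tau_0,\cdot)$ and its consequences along $A(C_0)$), at which point you are essentially reconstructing the paper's argument; as written, the central claim remains unproved.
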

\begin{proof}
We choose
an Abel-Jacobi embedding of $C_0$ into its Jacobian in such a way that the involution lifts to
$-1$, so that we need to check that $H(\tau_0,2x)=0$ for any $x$ on the curve.
As a function of $z$, $H(\tau,z)$ is a section of $2\Theta_\tau$; hence $H(\tau,2z)$ is a section of $8\Theta_\tau$. Since the theta function has degree 3 on a
curve of genus 3, $H(\tau_0,2x)$ is then a section of a line bundle of degree 24 on
$C_0$. Thus if we show it has more than 24 zeroes (counted with multiplicity), it is identically zero.

Note that by definition the 8 Weierstrass points are the fixed points of the involution
$\sigma$, so if $x$ is a Weierstrass point, we have $H(\tau_0,x-\sigma(x))=H(\tau_0,0)=0$.

Since we are in genus 3 we can choose the Abel-Jacobi map of $C_0$ in such a way that the eight Weierstrass point map to some points $x_0=0, x_1, \dots x_7$ of the Jacobian $J(C_0)$, and the associated characteristics are $0$ and $m_1, \dots , m_7$ with all $m_i$ odd.  A consequence of this   choice is $\theta(\tau_0,0)=0$.  So our choice of Abel-Jacobi map is consistent with our hypothesis.

Since $H(\tau_0,z)$ is an even function of $z$, its gradient vanishes at each two-torsion point on $J(C_0)$, and thus automatically $H$ vanishes at each Weierstrass
point to order at least 2. So we get 16 zeroes, counted with multiplicity. We now show that $H$ vanishes at each Weierstrass point to order at least 4, so the total number of zeroes is at least 32 (and thus $H(\tau_0,2x)$ will vanish identically).
To achieve this we will check that the second derivative is zero (the third derivative is zero by parity).

Indeed, we need to check that $\partial_U\partial_U H(\tau_0,z)|_{z=0}=0$, where $U$ denotes the tangent vector to the curve $C_0\subset J(C_0)$ at the Weierstrass point (which, without loss of generality, we can take to be 0).
Note that we have $\theta(\tau_0,0)=0$, so that $r_1(\tau_0)=0$ and thus $r_2(\tau_0)+r_3(\tau_0)=0$. By the heat equation, and using the parity
of each $\theta_m$ as a function of $z$, we then get, up to a constant factor,
$$
 \partial_U\partial_U H(\tau_0,z)|_{z=0}=
 2r_2^2(\tau_0)\sum\limits_{m\in V_2} \frac{\partial_{U\otimes U}\theta_m(\tau_0,0)}{\theta_m(\tau_0,0)}-2r_3^2(\tau_0)
 \sum\limits_{m\in V_3} \frac{\partial_{U\otimes U}\theta_m(\tau_0,0)}{\theta_m(\tau_0,0)}
$$
$$
 =\partial_{U\otimes U}(r_2^2(\tau_0)-r_3^2(\tau_0))=2r_2(\tau_0)\partial_{U\otimes U}
 (r_2(\tau_0)+r_3(\tau_0)
$$
where $\partial_{U\otimes U}$ denotes the derivative in the $\tau$ direction with
respect to the rank one matrix $U\otimes U$.
To compute this derivative, we will take the derivatives of  $r_2(\tau)+r_3(\tau) -r_1(\tau) \equiv 0$.  Since this is identically zero, also the derivative is  zero, hence
$$
 \partial_{U\otimes U}r_2(\tau_0)+ \partial_{U\otimes U}r_3(\tau_0)= \partial_{U\otimes U} r_1(\tau_0)
 =\frac{r_1(\tau_0)}{\theta(\tau_0)}\partial_{U\otimes U}\theta(\tau_0).
$$
We now note   that the theta function vanishes identically on $C_0$ since
$$\Theta_{\tau_0}=C_0-C_0\supset C_0-x_0= C_0-0= C_0.$$
So we have $\theta(\tau_0,x)=0$ for any point $x\in C_0$.
Using the heat equation again, we then see that
$$
\partial_{U\otimes U}\theta(\tau_0)=\partial_U\partial_U\theta(\tau_0,z)|_{z=0}=0
$$
is zero as the derivative of the zero function (recall that $U$ is the tangent vector to $C_0\subset J(C_0)$ at zero).
\end{proof}

In fact this statement implies the conjectural description of the strict transform of the correspondence on $C_0\times C_0$ in genus 3:

\begin{thm}\label{mainthm}
The conjecture of Farkas and Verra holds in genus 3, i.e.~the limit of the
Scorza correspondence within $C_0\times C_0$, where $C_0$ is a hyperelliptic curve
of genus 3 with involution $\sigma$, is the union of the curve $\lbrace (x,\sigma(x))\mid x\in C_0\rbrace$ and the diagonal with multiplicity 2.
\end{thm}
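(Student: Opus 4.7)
The plan is to combine the preceding proposition, which gives $\lbrace x-\sigma(x):x\in C_0\rbrace\subseteq S_{\tau_0}$ in $J(C_0)$, with intersection-theoretic arguments in two stages: first to promote this to an equality of $1$-cycles in $J(C_0)$, then to pin down the multiplicities of the flat limit in $C_0\times C_0$.

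First, I would show that $S_{\tau_0}=\lbrace x-\sigma(x)\rbrace$ as $1$-cycles by matching $\Theta$-degrees. As a proper intersection of $\Theta_{\tau_0}$ with a divisor of class $2\Theta$ (proper because of the non-divisibility proposition), the Scorza cycle has class $[\Theta]\cdot[2\Theta]=2[\Theta]^2$, of $\Theta$-degree $2[\Theta]^3=12$. Since $x-\sigma(x)=2x-g^1_2$ in $J(C_0)$, the curve $\lbrace x-\sigma(x)\rbrace$ is a translate of the image of $C_0$ under the doubling map $[2]$ of $J(C_0)$; its cohomology class is $[2]_\ast[C_0]$, of $\Theta$-degree $[C_0]\cdot[2]^\ast[\Theta]=4g=12$. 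The matching degrees, combined with the given inclusion, force cycle-level equality.

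To lift to $C_0\times C_0$, note that the Scorza correspondences $D_t\subset C_t\times C_t$ are cut out by the single equation $\theta_{\eta_t}(A(x)-A(y))=0$, so they form a flat family of Cartier divisors with constant cohomology class. Standard computations give $[D_t]\cdot[F_i]=g=3$ (the degree of $A^\ast\Theta_{\eta_t}$ on a fiber of a projection) and $[D_t]\cdot[\Delta]=0$ (since $\theta_{\eta_t}(0)\neq 0$ for non-effective $\eta_t$). These pass to the flat limit $\tilde D_0\subset C_0\times C_0$, whose support lies in $\lbrace (x,\sigma(x))\rbrace\cup\Delta$, the preimage under the difference map of $S_{\tau_0}=\lbrace x-\sigma(x)\rbrace$, with the diagonal arising as the contraction preimage of $0\in S_{\tau_0}$. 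Writing $\tilde D_0=m_1\lbrace (x,\sigma(x))\rbrace+m_2\Delta$, intersection with $F_1$ gives $m_1+m_2=3$, and intersection with $\Delta$ gives $8m_1-4m_2=0$ (using $\lbrace (x,\sigma(x))\rbrace\cdot\Delta=2g+2=8$ from transverse intersections at the $8$ Weierstrass points, where locally $\sigma(u)=-u$, and $\Delta^2=2-2g=-4$). Solving yields $m_1=1$ and $m_2=2$, exactly the conjectural multiplicities. Since $\tilde D_0$ is a Cartier divisor on the smooth surface $C_0\times C_0$, its cycle structure determines the scheme structure, yielding the theorem.

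The main obstacle is justifying that the support of the flat limit $\tilde D_0$ is exactly $\lbrace (x,\sigma(x))\rbrace\cup\Delta$ with no extra components. This requires careful handling of the degeneration of the difference map $C_t\times C_t\to J(C_t)$ from birational (for non-hyperelliptic $C_t$) to $2$-to-$1$ with contracted diagonal (for hyperelliptic $C_t$); the cycle-theoretic identification $S_{\tau_0}=\lbrace x-\sigma(x)\rbrace$ from the first stage is exactly what confines the image of $\tilde D_0$ under the difference map into $S_{\tau_0}$, thereby forcing the support of $\tilde D_0$ itself into the union $\lbrace (x,\sigma(x))\rbrace\cup\Delta$.
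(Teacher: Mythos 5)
Your proposal is correct in outline and close in spirit to the paper's argument: both rest on the preceding proposition together with the same intersection numbers ($s\cdot f=3$, $s\cdot\delta=0$, $h\cdot\delta=8$, $h\cdot f=1$, $\delta^2=-4$). But you organize the endgame differently, and the difference is real. The paper first argues that the limit in $C_0\times C_0$ contains $h+2\delta$, and then kills the effective residual by observing it would be numerically trivial, using that ${\rm NS}(\Sym^2 C_0)$ is generated by $f$ and $\delta$ for a \emph{very general} (hyperelliptic) curve. You instead first pin down the support of the flat limit, via the cycle identity $S_{\tau_0}=\lbrace x-\sigma(x)\rbrace$ in $J(C_0)$ obtained from the degree count $2[\Theta]^3=12=[C_0]\cdot[2]^*[\Theta]$ (a step the paper asserts but does not verify by such a count), and then solve $m_1+m_2=3$, $8m_1-4m_2=0$ for the multiplicities. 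This buys you two things: you never need the N\'eron--Severi generation statement (so no ``very general'' hypothesis enters at that stage), and you get the multiplicity $2$ of the diagonal as an output rather than having to justify it a priori from the multiplicity-$2$ passage of $\lbrace 2x\rbrace$ through $0\in C_0-C_0$.

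The one step you should justify more carefully is the properness of the intersection $\Theta_{\tau_0}\cap\lbrace A(\tau_0,\cdot)=0\rbrace$, on which your degree count depends. The non-divisibility proposition, as stated, excludes an identity $A(\tau,z)=D(\tau,z)\theta(\tau,0)$ on all of $\HH_3\times\CC^3$; properness at $\tau_0$ needs more, namely that $A(\tau_0,\cdot)$ does not vanish identically on $\Theta_{\tau_0}$, i.e.\ is not a (possibly nonzero) multiple of $\theta^2(\tau_0,\cdot)$. This is not ruled out by the proposition as literally stated (it corresponds to modifying the coefficient of $\theta^2(\tau,z)$, not to dividing $A$ by $\theta(\tau,0)$), and it is a genuine concern here because $\lbrace 2x\rbrace\subset\Theta_{\tau_0}$, so the vanishing of $A(\tau_0,\cdot)$ on $\lbrace 2x\rbrace$ alone cannot distinguish $A(\tau_0,\cdot)$ from $c\,\theta^2(\tau_0,\cdot)$. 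The repair is available in the paper's proof of that proposition: the Maple computation establishes the stronger fact that $F(\tau,z)$ does not lie in the ideal generated by $\theta^2(\tau,0)$ and $\theta^2(\tau,z)$, and if $A(\tau_0,\cdot)$ were proportional to $\theta^2(\tau_0,\cdot)$ along the theta-null divisor, $F$ would lie in that ideal. Note, however, that this yields properness only for generic $\tau_0$ on the theta-null divisor, so as written your argument proves the statement for generic hyperelliptic curves and needs a further specialization remark to reach all of them --- a caveat the paper's own proof shares through its ``very general'' N\'eron--Severi input.
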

\begin{proof}
Indeed, note that the image of the locus $\lbrace (x,\sigma(x))\mid x\in C_0\rbrace$ in
the Jacobian $J(C_0)$, being equal to $S_{\tau_0}=\lbrace 2x\mid x\in C_0\rbrace\subset
C_0-C_0$, is a curve that passes through the singular point $0\in C_0-C_0$ with
multiplicity 2. Thus its preimage on $C_0\times C_0$ contains the two components as claimed, and
it remains to show that there are no extra components.

This can be done by an intersection number computation. Indeed, for a very general curve $C$ (and for a very general hyperelliptic curve $C_0$) the Neron-Severi group $NS(\Sym^2 C)$ is generated by two classes: the fiber $f:=\lbrace (x,p)\mid \forall x\in C\rbrace$ and the diagonal $\delta:=\lbrace (x,x)\mid \forall x\in C\rbrace$. For a non-hyperelliptic curve $C$ the Scorza correspondence is the locus $s:=\lbrace (x,y)\mid \theta(x-y)=0\rbrace$. It has intersection numbers $s\cdot \delta=0$ (since the intersection is empty) and $s\cdot f=3$ (since the degree of the theta function on the curve is equal to $g$, which is 3 for our case). Now for a hyperelliptic curve $C_0$ on $\Sym^2C_0$ we let $h:=\lbrace (x,\sigma(x))\mid \forall x\in C_0\rbrace$ and compute
$h\cdot \delta=8$ (since these are the fixed points of the involution) and $h\cdot f=1$. Since we also have $\delta^2=2-2\cdot 3=-4$ and $\delta\cdot f=1$, we get
$$
 (h+2\delta)\cdot\delta=8-2\cdot 4=0\quad{\rm and}\quad (h+2\delta)\cdot f=1+2\cdot 1=3.
$$
It thus follows that if the strict transform of $s$ were to contain any curve besides $h+2\delta$, the intersection of this curve with both $\delta$ and $f$ would be zero. Thus the limit of the Scorza correspondence over a hyperelliptic curve contains no further components, and is as claimed.
\end{proof}


\end{document}